\newtheorem{thm}{Theorem}[section]
\newtheorem{lem}[thm]{Lemma}
\newtheorem{prop}[thm]{Proposition}
\theoremstyle{definition}
\newtheorem{rem}[thm]{Remark}
\newcommand{\ZZ}{{\mathbb Z}}
\newcommand{\cP}{{\mathcal P}}
\newcommand{\cR}{{\mathcal R}}
\newcommand{\tC}{{\widetilde C}}
\newcommand{\cH}{\mathcal H}
\newcommand{\ra}{\rightarrow}
\DeclareMathOperator{\Nm}{Nm}
\DeclareMathOperator{\Pic}{Pic}
\DeclareMathOperator{\Fix}{Fix}
\DeclareMathOperator{\Image}{Im}
\DeclareMathOperator{\Id}{Id}
\DeclareMathOperator{\Aut}{Aut}
\DeclareMathOperator{\Hom}{Hom}
\newcommand{\unnumberedfootnote}[1]{%
  \begingroup
  \renewcommand{\thefootnote}{}%
  \footnotetext[0]{#1}%
  \endgroup
}
\title{Prym maps of cyclic coverings of hyperelliptic curves}
\author{Pawe\l{} Bor\'owka}
\address{Pawe\l{} Bor\'owka \newline Institute of Mathematics, Jagiellonian University in Krak\'ow\\ ul. prof. Stanisława Łojasiewicza 6, 
30-348 Kraków, Poland}
\email{pawel.borowka@uj.edu.pl}
\author{Juan Carlos Naranjo}
\address{Juan Carlos Naranjo \newline 1. Departament de Matem\`atiques i Inform\`atica, Universitat de Barcelona, Gran Via de les Corts Catalanes, 585, 08007 Barcelona, Spain \newline 2. Centre de Recerca Matemàtica, Edifici C, Campus Bellaterra, 08193 Bellaterra, Spain }
\email{jcnaranjo@ub.edu}
\author{Angela Ortega}
\address{Angela Ortega
\newline Institut für Mathematik, Humboldt Universität,
Unter den Linden 6, D-10099 Berlin, Germany}
\email{ortega@math.hu-berlin.de}
\author{Anatoli Shatsila}
\address{Anatoli Shatsila \newline Doctoral School of Exact and Natural Sciences, Jagiellonian University in Krak\'ow \\ ul. prof. Stanisława Łojasiewicza 6, 
30-348 Kraków, Poland}
\email{anatoli.shatsila@doctoral.uj.edu.pl}
\date{\today}
\begin{document}

\begin{abstract} 
We prove that the Prym map corresponding to étale cyclic coverings of hyperelliptic curves is injective whenever the degree of the covering $d \geq 6$ is not a power of an odd prime. For other degrees $d\geq 9$, we show that the Prym map is generically injective. In particular, we complete the study of Prym maps of étale cyclic coverings of genus 2 curves.
\end{abstract}

\maketitle

\unnumberedfootnote{\textit{2020 Mathematics Subject Classification:} 14H40, 14H30, 14H45, 14K12}
\unnumberedfootnote{\textit{Key words and phrases:} Prym variety, Prym map, coverings of curves.}

\section{Introduction}

Algebraic curves admitting automorphisms are objects of significant interest, particularly because they can be used to construct coverings. A special role is played by the \'etale cyclic coverings of curves of genus 2, as they appear as sections of (non-principal) line bundles on abelian surfaces.

Coverings of curves are also the main characters  of the Prym theory, which relates them to polarised abelian varieties. More precisely, let $f:\tC\to H$ be a covering. Consider the norm map $\Nm_f:J\tC\to JH$ defined by $\Nm_f(\sum P_i)=\sum f(P_i)$, which is a well-defined group homomorphism. The connected component of its kernel containing the origin, denoted by $P(\tC/ H)$, is called the Prym variety of the covering. The Prym map associates the Prym variety $P(\tC/H)$ to a covering $f:\tC\to H$. A central aim of Prym theory is to understand the geometry of the fibres of this map.

Historically, the first examples studied were the \'etale double coverings, which provided a simple method (apart from Jacobians) of constructing principally polarised abelian varieties. A full description of the generic fibre of the Prym map for \'etale double coverings was achieved in the 1980s. Regarding branched double coverings, following a series of papers by various authors, it was established in 2022 that the Prym map is injective if there are at least 6 branch points \cite{NO22}.

More recently, the Prym map for coverings of higher degrees has attracted significant attention. For instance, it is known that cyclic \'etale coverings of degree 3 over hyperelliptic curves of genus $g \leq 4$—and those of degree 5 over genus $g \leq 2$—have positive-dimensional fibres, see \cite{AlbPir}. Additionally, for degree 4 coverings over genus 2 curves, an explicit description of the fibres is provided in \cite{Sha25}. On the other hand, it was shown in \cite{Ago} that the Prym map for cyclic \'etale coverings of genus 2 curves is generically finite if the degree is at least 6. In \cite{LO7}, the authors proved that the degree of the Prym map for cyclic coverings of degree 7 is 10. Finally, the Prym maps of Klein coverings were investigated in \cite{BO20} and \cite{BOhyp}, where injectivity is shown. In summary, most of the existing results have been obtained on a case-by-case basis.

Another partial result was obtained in \cite{NOS}, where the generic injectivity of the Prym map for odd prime degrees of the so-called Sophie Germain type was proven. More importantly, the injectivity result was generalised in \cite{NOPS} to odd prime coverings of hyperelliptic curves of any genus, subject to mild numerical restrictions on the degree of coverings. As a side result, there is also a proof that the Prym map is generically injective for degree 9 \'etale cyclic coverings of genus 2 curves.

In this paper, we aim to complete the study of cyclic \'etale coverings over hyperelliptic curves of genus 2. Our results follow from theorems applicable to hyperelliptic curves of any genus. Let us first introduce some  notation. Let $d \geq 2$ be an integer. Let $\mathcal{RH}_g[d]$ denote the moduli space of \'etale cyclic coverings of degree $d$ over hyperelliptic curves of genus $g$, and let $\mathcal{A}_{n}^{\delta}$ denote the moduli space of polarised abelian varieties of dimension $n$ with polarisation type $\delta$. The main results of the paper are the following theorems:

\begin{thm}[Theorem \ref{maineven}]
For any $g\geq 2$ and $q\geq 3$ the Prym map
 $$\mathcal{P_H}_g[2q]: \mathcal{RH}_g[2q]\to\mathcal{A}_{(g-1)(2q-1)}^{(1,1,\ldots,1, 2q, \ldots, 2q)}, \quad [\tC \stackrel{f}{\ra} H ] \mapsto [P(f), \Theta_P]$$ 
 is injective.
\end{thm}

Here $2q$ appears $g-1$ times in the polarisation type. The injectivity of these Prym maps relies on the following argument. The action of the dihedral group on a given Prym variety in the image of $\mathcal{P_H}_g[2q]$ can be uniquely determined (Proposition \ref{group}), which allows us to recover some of the Jacobians of the curves appearing in the tower (Diagram \eqref{big-tower}). Subsequently, we are able to recover a non-Galois intermediate covering of the tower, and $\tC$ can then be reconstructed as its Galois closure. It is worth noting that, by \cite{Ago, Sha25}, the Prym map for a cyclic covering of degree 4 over a genus 2 curve is not finite.

For the case of odd non-prime degree, we have the following result.

\begin{thm}[Theorem \ref{mainodd} and Theorem \ref{mainodd2primes}]
Let $g \geq 2$ be an integer and $d\geq 9$ an odd non-prime number. The Prym map  $$\mathcal{P_H}_g[d]: \mathcal{RH}_g[d]\to\mathcal{A}_{(g-1)(d-1)}^{(1,1,\ldots, 1,d,\dots,d)}, \quad [\tC \stackrel{f}{\ra} H ] \mapsto [P(f), \Theta_P]$$ is generically injective if $d$ is a power of an odd prime and is injective if $d$ has at least two different prime divisors.
\end{thm} 

While the main idea of the proof is similar in both cases, we note that the proof in the odd case is more subtle. This distinction arises because one of the intermediate curves is hyperelliptic for even degree coverings, whereas it is only $p$-gonal (for an odd prime $p$) for coverings of odd degree. This difference is also the reason why, in the case of prime powers, we are unable to show global injectivity.

\subsection*{Injectivity versus generic injectivity.}

Prym maps of \'etale double coverings of curves of genus $g\geq 7$ are generically injective, and examples of non-injectivity loci are known. Consequently, many established methods apply only generically. In particular, the results of \cite{Ago, LO18, NOS, NOPS} hold only generically, even though we currently lack examples demonstrating the necessity of this assumption. However, recent results have established the (global) injectivity of the Prym map in certain cases, most notably for double coverings branched in at least 6 points \cite{NO22} and for \'etale Klein coverings \cite{BO20}. We emphasise that our results also establish the global injectivity of many of the considered Prym maps.

Given that the Prym map for coverings of degree 7 is generically of degree 10 (see \cite{LO7}), it is interesting that the Prym map for degree 6 is injective. Moreover, we have reproved that the Prym variety in this case is completely reducible and isogenous to a product $E\times F^2\times G^2$, where $E, F,$ and $G$ are elliptic curves (see \cite{EkSer}).

The structure of the paper is as follows. In Section 2, we study the automorphisms of the Prym variety $P$ that fix each element in the kernel of the natural polarisation $\Theta_P$. This is the key result for reconstructing the covering $\widetilde{C} \to H$ from the polarised abelian variety $(P(\widetilde{C}/H), \Theta_P)$. We also establish some auxiliary results concerning pullbacks of coverings that naturally arise in our setting. Section 3 is devoted to the study of the Prym map for cyclic coverings of even degree. In Section 4, we focus on cyclic coverings of odd degree. Finally, Section 5 is devoted to final remarks and open questions.

\subsection*{Acknowledgements.} 
P. Bor\'owka has been supported by the Polish National Science Centre project number 2024/54/E/ST1/00330. 
J.C. Naranjo was partially supported by the Spanish MINECO research project PID2023-147642NB-I00. A. Shatsila was supported by the Polish National Science Centre project number 2024/53/N/ST1/01634 and by ID.UJ Research Support Module program at the Faculty of Mathematics and Computer Science of Jagiellonian University in Krak\'ow.

\section{Preliminary results}
Recall that $\mathcal{P_H}_g[d]$ is the Prym map of \'etale cyclic coverings of degree $d$ over hyperelliptic genus $g$ curves, where $g\geq 2$. Let $f:\tC\to H$ be such a covering. Let $\sigma\in\Aut(\tC)$ be a deck transformation that induces the covering $f$ and $j \in\Aut(\tC)$ be a lift of the hyperelliptic involution $\iota$ from $H$. The following proposition is an analogue  of \cite[Proposition 3.1]{NOS} and \cite[Proposition 3.1]{borówka2025prymsmathbbz3timesmathbbz3coveringsgenus}. 

\begin{prop}
\label{group}
    Let $d \geq 2$ an integer and $(P, \Theta_P)$ be an element of $\Image\mathcal{P_H}_g[d]$. 
    Then the subgroup of automorphisms $$G := \{ \phi \in \Aut(P,\Theta_P) \: | \: \phi(x) = x \:\: \forall x \in K(\Theta_P)\}.$$
    is isomorphic to $\langle \sigma, -j \rangle \simeq {D}_{d}$.
\end{prop}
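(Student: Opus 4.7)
The plan is to prove both inclusions. For the easy inclusion $\langle \sigma \rangle \subseteq \{\phi \in \Aut(P,\Theta_P) : \phi|_{K(\Theta_P)} = \mathrm{id}\}$, the automorphism $\sigma$ of $\widetilde{C}$ commutes with $\Nm_f$, hence induces an automorphism $\sigma_*$ of $P$ of order $2d$ preserving the restricted polarisation $\Theta_P$. To see that $\sigma_*$ acts trivially on $K(\Theta_P)$, I would describe $K(\Theta_P)$ explicitly in terms of the isogeny between $J\widetilde C$ and the direct product $JH\times P$: the kernel of $\Theta_P$ is controlled by the image of the cyclic subgroup $\langle \eta \rangle\subset JH[2d]$ defining the covering, and $\sigma$ fixes all such $H$-classes by construction.

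For the converse, let $\phi\in \Aut(P,\Theta_P)$ fix $K(\Theta_P)$ pointwise. The strategy is to lift $\phi$ to an automorphism $\widetilde\phi$ of $(J\widetilde C,\Theta_{J\widetilde C})$ and apply the Torelli theorem. Consider the isogeny
\[
\mu\colon JH\times P\lra J\widetilde C,\qquad (\alpha,\beta)\longmapsto f^*\alpha+\beta,
\]
whose kernel is an anti-diagonal subgroup identified with $K(\Theta_P)$ (matched to $\ker f^*\cong\langle\eta\rangle$ via the polarisation). Setting $\widetilde\phi:=\mathrm{id}_{JH}\times\phi$ on $JH\times P$, the hypothesis that $\phi$ fixes $K(\Theta_P)$ pointwise is precisely what is needed for $\widetilde\phi$ to preserve $\ker\mu$ and therefore descend to a polarisation-preserving automorphism of $J\widetilde C$.

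By the Torelli theorem, $\widetilde\phi=\pm\tau_*$ for some $\tau\in\Aut(\widetilde C)$. Since by construction $\widetilde\phi$ acts as the identity on the subvariety $f^*(JH)\subset J\widetilde C$, the sign must be $+$ and $\tau$ must preserve the fibres of $f\colon\widetilde C\to H$, forcing $\tau\in\langle\sigma\rangle\subset D_{2d}$. Restricting back to $P$ yields $\phi=\sigma_*^k$ for some $k$.

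The main obstacle I expect is step two: precisely identifying $\ker\mu$ inside $JH\times P$ with $K(\Theta_P)$ via the polarisation data, and checking that the pointwise fixing hypothesis on $K(\Theta_P)$ is exactly the right descent condition for $\widetilde\phi$. A second subtlety is handling the sign ambiguity in Torelli: although the argument sketched above uses the action on $f^*(JH)$ to rule out the $-$ case, one must also verify that composition with $-1$ does not produce an additional element fixing $K(\Theta_P)$ that is not already captured by $\langle\sigma\rangle$ (for $d$ odd this is automatic since $-1\notin\langle\sigma\rangle$ acts non-trivially on $K(\Theta_P)$, but the verification is part of the calculation referenced from \cite{NOS} and \cite{borówka2025prymsmathbbz3timesmathbbz3coveringsgenus}).
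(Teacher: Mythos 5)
The paper does not reprove this proposition; it cites \cite{NOS} and \cite{borówka2025prymsmathbbz3timesmathbbz3coveringsgenus}, whose argument is exactly the one you sketch: extend $\phi$ by the identity on $f^*JH$ through the isogeny $JH\times P\to J\tC$ (using that $f^*JH\cap P=K(\Theta_P)$, so fixing $K(\Theta_P)$ pointwise is precisely the descent condition), apply Torelli to $J\tC$, and rule out the sign $-1$ and non-deck automorphisms via the action on $f^*JH$. Your outline is correct and essentially identical to that referenced proof; the two subtleties you flag are genuine but standard verifications.
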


\begin{proof} Let $f: \tC \to H$ be a covering with ${P}(f) = P$ and Galois group $\langle \sigma \rangle \simeq \mathbb{Z}_{d}$. Let $\sigma_{|P},\ j_{|P}$ be the automorphisms of $(P,\Theta_P)$ induced  by $\sigma$, respectively by a lift of the hyperelliptic involution, on $J\tC$. 
Note that $K(\Theta_P) = f^*JH \cap P(f) \subset \Fix(-j) \cap \Fix(\sigma)$ since $\sigma_{|f^*JH} = \text{id}_{JH} = (-j)_{|f^*JH}$. Therefore, we have $D_{d} \simeq \langle \sigma, -j \rangle \subseteq G$.  

Let $\psi \in G$. Then, there is an automorphism 
$\Tilde{\psi}: J\tC \to J\tC$ such that the following diagram commutes:


\begin{equation}\label{extended_action}
        \begin{tikzcd}
0 \arrow[r] & K(\Theta_P) \arrow[d, equal] \arrow[r] & f^*JH \times P \arrow[d, "{(\text{id},\psi)}"] \arrow[r, "\mu"] & J\tC \arrow[d, "\Tilde{\psi}"] \arrow[r] & 0 \\
0 \arrow[r] & K(\Theta_P) \arrow[r]                                & f^*JH \times P \arrow[r, "\mu"]                           & J\tC \arrow[r]                           & 0
\end{tikzcd},
\end{equation}
where $\mu$ is the addition map. We shall show that $\Tilde{\psi}$ is a polarised isogeny. It follows from Diagram \eqref{extended_action} that  
$\mu^*\Tilde{\psi}^*\mathcal{O}_{J\tC}(\Tilde{\Theta})$ and $ \mu^*\mathcal{O}_{J\tC}(\Tilde{\Theta})$ are equal as polarisations on $f^*JH \times P$ because $(\text{id},\psi)$ is a polarised isomorphism.
Now, since $\mu^*$ has a finite kernel,  
$\Tilde{\psi}^*\mathcal{O}_{J\tC}(\Tilde{\Theta}) \otimes \mathcal{O}_{J\tC}(\Tilde{\Theta})^{-1}$ is a torsion sheaf, hence it belongs to $\Pic^0(J\tC)$. 
Therefore, $\Tilde{\psi}^*\mathcal{O}_{J\tC}(\Tilde{\Theta})$ induces the canonical polarisation $\Tilde{\Theta}$ on $J\tC$ and thus $\tilde \psi \in \Aut(J\tC, \tilde{\Theta})$. 

Recall that \cite[Exercise 11.19]{BL} $$\Aut(\tC) = \begin{cases} \Aut(J\tC, \tilde{\Theta})  \text{ if } \tC \text{ is hyperelliptic} \\  \Aut(J\tC, \tilde{\Theta})/ \langle -1_{J\tC} \rangle \text{ if } \tC \text{ is nonhyperelliptic.} \end{cases}$$ Therefore,  there is an automorphism $\Tilde{\psi}_0$ of $\tC$ inducing either $\Tilde{\psi}$ or $-\Tilde{\psi}$, in other words, inducing $(-1)^i\Tilde{\psi}$, where $i \in \{0,1\}$. We will prove that if $i=0$ then $\Tilde{\psi}_0 \in \text{Gal}(\tC/H) = \langle \sigma \rangle $ and if $i = 1$ then $\Tilde{\psi}_0 \in \{j, j\sigma, \ldots, j\sigma^{d-1}\}$, that is,  $\Tilde{\psi}_0$ is either a lift of the identity or a lift of the hyperelliptic involution in $H$. 

Let us first show that $\Nm_f \circ (Id_{J\tC} - \Tilde{\psi}) = 0$. Writing $x \in J\tC$ as $y + f^*(z)$ with $y \in P, z \in JH$ we have $$(\Id_{J\tC} - \Tilde{\psi})(x) = y + f^*(z) - (\Tilde{\psi}(y) + f^*(z)) = y - \Tilde{\psi}(y).$$ Since $\Tilde{\psi}(y) \in P$ for any $y \in P$ we get $\Nm_f \circ (Id_{J\tC} - \Tilde{\psi}) = 0$. Let $p_1,p_2 \in \tC$ be two points in the same fibre  of $f$. Then
$\Nm_f(p_1 - p_2 -(-1)^i( \Tilde{\psi}_0(p_1) - \Tilde{\psi}_0(p_2))) = 0$, hence $f(\Tilde{\psi}_0(p_1)) = f(\Tilde{\psi}_0(p_2))$. Therefore, $\Tilde{\psi}_0$ descends to an automorphism $\psi_0$ on $H$. Denoting the corresponding automorphism of $JH$ by the same letter we have $$f^* \circ \psi_0 = (-1)^i\Tilde{\psi}_0 \circ f^* = (-1)^if^*$$ implying $$f^*\circ ((-1)^i_{JH} - \psi_0) = 0.$$ Since $\ker f^*$ is finite, we get $\psi_0 = (-1)^i_{JH}$, hence $\psi_0 \in \Aut(H)$ is either the identity or the hyperelliptic involution. Therefore, $\Tilde{\psi}_0$ is either a lift of the identity or a lift of the hyperelliptic involution in $H$, hence it is enough to show that $\psi \mapsto \Tilde{\psi}_0$ is an injection. Given two elements $\varphi, \psi \in G$ inducing the same $\alpha \in \Aut(\tC)$ we see that either $\tilde{\varphi} = \tilde{\psi}$ or $\tilde{\varphi} =  (-1)_{J\tC}\circ \tilde{\psi}$. The latter case is impossible since $\tilde{\varphi}_{|f^*JH} = \tilde{\psi}_{|f^*JH} = \text{id}_{f^*JH}$, hence $\tilde{\varphi} = \tilde{\psi}$ implying $\varphi = \psi$. 
\end{proof} 

\begin{rem}
    A similar statement in \cite[Proposition 3.1]{NOS} is incorrect. The subgroup of automorphisms fixing the kernel of 
    the polarisation should be in that case $D_{d}$ instead of $\ZZ_d$ (where $d$ is a prime number) and contains $-j$ where $j$ is a lift of hyperelliptic involution. The mistake has no impact on the rest of the paper \cite{NOS}, as well as on \cite{NOPS} where it is also applied, since $\ZZ_d$ is a distinguished subgroup in $D_d$.  
\end{rem}

 We will use the following notation for the decomposition of the Prym variety. Let $A$ be an abelian variety and $B_1,\ldots,B_k$ its abelian subvarieties. Let $\varepsilon_i$ be the associated symmetric idempotent of $B_i$, see \cite[\S 5]{BL}. We write $A=B_1\boxplus\ldots\boxplus B_k$ if $\varepsilon_1+\ldots+\varepsilon_k=1$, i.e. if the $B_i$'s are complementary abelian subvarieties (see \cite[Definition 5.1]{BO19}). We call $B_1,B_2$ orthogonal if $\varepsilon_{B_1}+\varepsilon_{B_2}$ is an idempotent. Note that being orthogonal (or complementary) depends on the polarisation of the ambient abelian variety.

We need to describe some quotient curves of the top curve of the covering $f:\tC\to H$. Let $p$ be a prime divisor of $d$ and $q=\frac{d}{p}$, with $q\geq 2$. As before, let $\sigma$ be a deck transformation that defines $f$ and let $j$ be a lift of the hyperelliptic involution on $H$. We denote by $C_{\sigma^p}=\tC/\langle\sigma^p\rangle,\ C_0=\tC/\langle j\rangle, E_0=\tC/\left<\sigma^p, j\right>$ We have the following commutative diagram:
\begin{equation}
\label{towerforn}
\begin{tikzcd}
\tC \arrow[rrdd, "q:1"] \arrow[rrrd, "2:1"] \arrow[ddd, "d:1"] &  &                                                    &                        &                        \\
                                                                &  &                                                    & C_0 \arrow[rdd, "q:1"] &                        \\
                                                                &  & C_{\sigma^p} \arrow[lld, "p:1"] \arrow[rrd, "2:1"] &                        &                        \\
H \arrow[rrd, "2:1"]                                            &  &                                                    &                        & E_0 \arrow[lld, "p:1"] \\
                                                                &  & \mathbb{P}^1                                       &                        &                       
\end{tikzcd}
\end{equation}

From the Diagram \eqref{towerforn} and from the definition of Prym variety, we can deduce the following decomposition of $J\tC$:
\begin{equation}
J\tC=f^*JH\boxplus P(\tC/H)=f^*JH\boxplus P(C_{\sigma^p}/H)\boxplus P(\tC/C_{\sigma^p}).   
\end{equation}
Later on, we will refine this decomposition in more detailed cases. Now, we would like to show that $JE_0$ is embedded in $J\tC$.

\begin{lem}\label{embeddingE}
Let $E_0 := \tC/\langle\sigma^p,j\rangle$ and $\pi: \tC \to E_0$ be the quotient map. Then $\pi^*:JE_0\to J\tC$ is an embedding. In particular, $\pi:\tC\to E_0$ does not factorise via $\tC\to C''\to E_0$ where $C''\to E_0$ is cyclic \'etale. 
\end{lem}
\begin{proof}
Denote by $g:\tC\to C_{\sigma^p}$ and $h:C_{\sigma^p} \to H$ the quotient maps. Then, by \cite[Proposition 2.4]{Ortega}, we have $JC_{\sigma^p}=h^*JH\boxplus P(h)$, and $JE_0$ embeds into $P(h)\subseteq JC_{\sigma^p}$. By \cite[Corollary 12.1.4]{BL} and \cite[Proposition 3.2.9]{LR} we get $P(h)\cap h^*JH=K(h^*JH)\cong \ker(h^*)^{\perp}/\ker(h^*)$.
Note that, by construction, $\ker(h^*)^\perp\subseteq JH[p]$ because $h$ is cyclic of degree $p$.
On the other hand, if $\eta\in JH[d]$ is a line bundle defining the covering $f$ of degree $d$, then $\eta^q$ is a line bundle defining $h$ and $\left<\eta\right>\cap JH[p]=\left<\eta^q\right>=\ker (h^*)$. Since $\left<h^*(\eta)\right>=\ker(g^*)$, we get $0=\ker(g^*)\cap K(h^*JH)=\ker(g^*)\cap P(h)$. This shows that $g^*_{|P(h)}$ is an embedding, so $JE_0$ is embedded in $J\tC$.

The second part of the statement follows directly from \cite[Proposition 11.4.3]{BL}.
\end{proof}

We will also need the following lemma.
\begin{lem}\label{key_lemma}
    Let $h:C'\to C$ be a covering that does not factorise via an \'etale covering $C'\to C''\to C$.
        Then $h^*: JC\to JC'$ is an embedding and any other embedding $i:JC\to JC'$ with $\Image i=\Image h^*$ is (up to an isomorphism of $JC$) given by $h^*$.
\end{lem}
\begin{proof} 
    Let $h:C'\to C$ be a covering. Then 
    $\Nm_h:JC'\to JC$ given by $\Nm_h(\sum P)=\sum h(P)$ is the unique extension of $h$ to Jacobians and by \cite[Proposition 11.4.3]{BL} the pullback map $h^*:JC\to JC'$ is an embedding. Moreover, by \cite[Equation 2 p. 332]{BL}) $h^*$ is dual to $\Nm_h$ after identifying Jacobians with their duals.

    If $i$ is another embedding then certainly $\alpha=i^{-1}\circ h^*$ is an automorphism of $JC$ and $h^*=i\circ \alpha$. Note that if $C$ is an elliptic curve we need to abuse notation and treat a translation on an elliptic curve as an automorphism.
\end{proof}

\section{Cyclic coverings of even degree}\label{DCov}

Let $d=2q \geq 6$ and $(\tC, H) \in \cR\cH_g[2q]$ be a degree $2q$ cyclic covering of a genus $g$ hyperelliptic curve $H$. We denote by $\sigma$ an automorphism of order $2q$ on $\tC$, such that $H =\tC/\langle \sigma \rangle$. The hyperelliptic involution on $H$ lifts to an involution $j$ on $\tC$, in such a way that the dihedral group 
$$
D_{2q}=\langle \sigma, j \ : \ \sigma^{2q}=j^2=1,\ \sigma j = j\sigma^{-1}   \rangle
$$
acts as automorphisms on $\tC$. 
This action gives rise to the following tower of curves:
\begin{equation}
    \label{big-tower}
\begin{tikzcd}
                      &                          &                                                                      & \tC \arrow[ld, "2:1"] \arrow[ddd, "2q:1"] \arrow[rd, "q:1"'] \arrow[llld, "2:1"] \arrow[rrrd, "2:1"] &                                                                     &                         &                        \\
C_0 \arrow[rd, "2:1"] &                          & C_{\sigma^q} \arrow[ld, "2:1"'] \arrow[rdd, "q:1"] \arrow[d, "2:1"'] &                                                                                                      & C_{\sigma^2} \arrow[rd, "2:1"] \arrow[ldd, "2:1"'] \arrow[d, "2:1"] &                         & C_1 \arrow[ld, "q:1"'] \\
                      & F_0 \arrow[rrdd, "q:1"'] & F_1 \arrow[rdd, "q:1"']                                              &                                                                                                      & E_0 \arrow[ldd, "2:1"]                                              & E_1 \arrow[lldd, "2:1"] &                        \\
                      &                          &                                                                      & H \arrow[d, "2:1" description]                                                                       &                                                                     &                         &                        \\
                      &                          &                                                                      & \mathbb{P}^1                                                                                         &                                                                     &                         &                       
\end{tikzcd}
\end{equation}

Here, $C_{\sigma^2} := \tC /\langle \sigma^2 \rangle$, $C_{\sigma^q} := \tC /\langle \sigma^q \rangle$, $C_i = \tC / \langle j\sigma^i \rangle$, $E_0 = \tC/\langle \sigma^2, j\rangle, E_1 = \tC/ \langle j\sigma, \sigma^2  \rangle$, $F_0 = \tC / \langle j, 
\sigma^q\rangle, F_1 = \tC / \langle j\sigma, 
\sigma^q \rangle$. Note that if $q$ is odd, the curves $F_0$ and $F_1$ are isomorphic.
By the Riemann-Hurwitz formula, genera of $\tC$, $C_{\sigma^q}$ and $C_{\sigma^2}$ are $2q(g-1)+1$, $q(g-1)+1$ and $2g-1$ respectively. We denote by $I_{even} = \{j, j\sigma^2, \ldots, j\sigma^{2q-2}\}$ and $I_{odd} = \{j\sigma, j\sigma^3, \ldots, j\sigma^{2q-1}\}$ two disjoint conjugacy classes of involutions in $D_{2q}$. In particular,  the curves $C_0$, $C_2, \ldots, C_{2q-2}$ are isomorphic to each other and the same for $C_1$, $C_3, \ldots, C_{2q-1}$.

\begin{lem}\label{genera}
There exists some \(1\leq m \leq \lfloor  \frac{g+1}{2}\rfloor\) such that $g(C_0) = q(g-1)-g+m$, $g(C_1) = q(g-1)-m+1$, $g(E_1) = g-m$, $g(E_0) = m-1$. Moreover, $g(F_0) = \frac{q(g-1)}{2}+m-g, g(F_1)=\frac{q(g-1)}{2}+1-m$ if $q$ is even and $g(F_0) = g(F_1) =  \frac{(q-1)(g-1)}{2}$ if $q$ is odd.     
\end{lem}

\begin{proof}
     Let $\eta$ be a line bundle on $H$ defining the covering $f: \tC \to H$. Then $\eta^q \in \Pic^0(H)[2]$ so $\eta^q  = \mathcal{O}_H(\sum_{i=1}^m(w_{2i}-w_{2i-1}))$,
     where $w_1, \ldots, w_{2m}$ are Weierstrass points on $H$ for some \(1\leq m \leq \lfloor  \frac{g+1}{2}\rfloor\). The hyperelliptic involution $\iota$ lifts to the hyperelliptic involution $j$ on $C_{\sigma^2}$ and another involution $j\sigma$. We choose these lifts
     so that the fixed points of $j\sigma$ are the fibers of $w_1,\ldots, w_{2m}$ and the fixed points of $j$ are the fibers of the remaining $2g+2-2m$ Weierstrass points. Since the involutions in $I_{odd}$ are the liftings of $j\sigma$ under the covering map $\tC \to C_{\sigma^2}$, 
     the union of all the fixed points of these involutions are precisely the fibers $f^{-1}(w_1), \ldots, f^{-1}(w_{2m})$. Therefore, every involution in $I_{odd}$ has $4m$ fixed points and every 
     involution in $I_{even}$ has $4g+4-4m$ fixed points. 
     From the Riemann-Hurwitz formula, it follows that $g(C_0) = q(g-1)-g+m$, $g(C_1) = q(g-1)-m+1$, $g(E_1) = g-m$, and $g(E_0) = m-1$.

The involution $j$ on $C_{\sigma^d}$ has $2g+2-2m+(2g+2-2m) = 4g+4-4m$ fixed points if $q$ is even and $2g+2-2m+2m = 2g+2$ if $d$ is odd, hence $g(F_0) = \frac{q(g-1)}{2}+m-g$ if $q$ is even and $g(F_0) = \frac{(q-1)(g-1)}{2}$ if $q$ is odd. Analogously, $g(F_1) = \frac{q(g-1)}{2}+1-m$ if $q$ is even and $g(F_1) = \frac{(q-1)(g-1)}{2}$ if $q$ is odd. 
\end{proof}

\begin{rem}
    Lemma \ref{genera} shows that the moduli $\mathcal{RH}_g[2q]$ is not irreducible for $g\geq 3$. Since the number of Weierstrass points in the presentation of a $2$-torsion point being $2m$ is unique (up to a complement $2g+2-2m$), we have at least $\lfloor\frac{g+1}{2}\rfloor$ irreducible components of $\mathcal{RH}_g[2q]$.
    \end{rem}

We also note that $C_{\sigma^2}$ is a hyperelliptic curve if and only if $m=1$ by \cite[Proposition 4.2]{BO19}. We would like to focus on the hyperelliptic curve $E_1$ of genus $g-m \geq 1$. 

We will need to following lemma.
\begin{lem}\label{elliptic}
   Assume that we are in the situation of Diagram \eqref{big-tower}.
   Then, the covering $C_1 \to E_1$ is never Galois. Its Galois closure is $\tC\to E_1$. Moreover, the hyperelliptic involution of $E_1$ (or $(-1)_{E_1}$ if $g-m=1$) lifts to automophisms $j, j\sigma,\ldots, j\sigma^{2q-1}$ of $\tC$.
\end{lem}
\begin{proof}
    
    Recall that $C_1 = \tC / \langle j\sigma^1 \rangle$ and $E_1 = \tC/ \langle j\sigma, \sigma^2  \rangle$. Since  
    $\sigma^2j\sigma=j\sigma^{2q-1}\neq j\sigma^3$, we have that $\sigma^2$ is not an automorphism of $C_1$ and hence $C_1\to E_1$ is not Galois. Since the covering $\tC\to E_1$ is of degree $2q$ and it is Galois, by minimality it has to be a Galois closure of $C_1\to E$.  

    Now, consider a commutative subdiagram
    \begin{equation*}
\xymatrix{
&\tC \ar[dl]_{q:1} \ar[dr]^{2:1}& \\
C_{\sigma^2} \ar[dr]_{2:1} & & C_1 \ar[dl]^{q:1} \\
& E_1 &
}
\end{equation*}
The hyperelliptic involution $\iota_{E_1}$ (or $(-1)_{E_1}$ if $g-m=1$) is induced by the involution $j$ of $C_{\sigma^2}$ and the lifts of $\iota_{E_1}$ to $C_{\sigma^2}$ are $j$ and $j\sigma$. Therefore, $\iota_{E_1}$ lifts to automophisms $j, j\sigma,\ldots, j\sigma^{2q-1}$ of $\tC$.
\end{proof}

For any map between curves $g:C'\to C$, if the pullback map  $g^*:JC\to JC'$ is injective then we consider $JC$ as a subvariety of $JC'$. If the pullback map is not injective, then we denote the image by $g^*(JC)$.
Now, we are ready to derive some information from Diagram \eqref{big-tower}.

\begin{prop}
\label{decomp}
    Using the notation from Diagram \eqref{big-tower} we have 
    \begin{enumerate}
        \item $P(C_{\sigma^2}/H)$ is isomorphic to $JE_0 \times JE_1$. If $g=2$, then $E_0 \simeq \mathbb{P}^1$, $E_1$ is an elliptic curve and $P(C_{\sigma^2}/H) = E_1$.
        \item If \(4 \nmid q\) then $P(C_{\sigma^q}/H)$ is isomorphic to $JF_0 \times JF_1$. If $4 \mid q$ then $P(C_{\sigma^q}/H)$ is isogenous to $JF_0 \times JF_1$;
        \item Let $m = 1$ (in particular, $C_{\sigma^2}$ is hyperelliptic). If \(4 \nmid q\) then $P(\tC/C_{\sigma^2})$ is isomorphic to  $JC_0 \times JC_0$. If \(4 \mid q\) then $P(\tC/C_{\sigma^2})$ is isogenous to  $JC_0 \times JC_0$;
        \item Let $G = P(C_0/F_0)$ and $m=1$. Then $P(\tC/H)$ is  isogenous to $G^2\times JF_0^2 \times JE_0 \times JE_1$. 
        \item Let $q=3$ and $g=2$. Then $G$ is an elliptic curve equal to the quotient of $C_0$ by the involution $\iota_0\sigma^3$, where $\iota_0$ is the hyperelliptic involution on $C_0$. In particular, $P(\tC/H)$ is completely decomposable.  
    \end{enumerate}
    
\end{prop}

\begin{proof}
    Part $(i)$ follows from \cite{Ortega} and Lemma \ref{genera}. 
    Assume that $m=1$ and $4 \nmid q$. Since $C_{\sigma^q} \to H$ and $\tC \to C_{\sigma^2}$ are coverings of degree $q$ not divisible by 4, it follows from \cite{Ortega} that there are isomorphisms $JF_0 \times JF_1 \simeq P(C_{\sigma^q}/H)$ and $JC_0\times JC_0 \simeq P(\tC/C_{\sigma^2})$. The result in the case $4 \mid q$ is a consequence of \cite{LO18}, hence $(ii)$ and $(iii)$ follow.

    To prove $(iv)$ notice that $$P(\tC/H) \sim P(\tC/C_{\sigma^2}) \times JE_0 \times JE_1 \sim JC_0^2 \times JE_0 \times JE_1 \sim G^2 \times JF_0^2 \times JE_0 \times JE_1,$$ where we write $A \sim B$ if $A$ is isogenous to $B$.

    For $q=3$ and $g=2$, $C_0$ is a genus 2 curve with the hyperelliptic involution $\iota_0$, hence $P(C_0/F_0) = G$, where $G = C_0/\langle \iota_0 \sigma^3 \rangle$
    is an elliptic curve.

\end{proof}

Recall that $\mathcal{RH}_g[2q]$ denote the moduli space of degree $2q$ cyclic \'etale coverings of  hyperelliptic curves of
genus $g$. We consider the Prym map $$\mathcal{P}_{\mathcal{H}_g}[2q]: \mathcal{R}\mathcal{H}_g[2q]\to\mathcal{A}_{(2q-1)(g-1)}^{(1,1,\ldots,1, 2q,\ldots,2q)}.$$ 
\begin{thm}\label{maineven}
Let $g\geq 2$ and $q\geq 3$. Then, the Prym map  $\mathcal{P_H}_g[2q]: \mathcal{RH}_g[2q]\to\mathcal{A}_{(2q-1)(g-1)}^{(1,1,\ldots,1, 2q, \ldots, 2q)}$ is injective.
\end{thm} 

\begin{proof}
	Let $(P,\Theta_P)$ be an element of the image of $\mathcal{P_H}_g[2q]$. By Proposition \ref{group} we recover $\langle \sigma, -j  \rangle \in \Aut(P,\Theta_P)$ as a subgroup isomorphic to $D_{2q}$. 
    Note that $\mathbb{Z}_{2q} \simeq \langle \sigma \rangle \subsetneq \langle \sigma, -j \rangle \simeq D_{2q}$ is the unique subgroup of 
    $D_{2q}$ isomorphic to $\mathbb{Z}_{2q}$, hence we can choose $\sigma \in \Aut(P,\Theta_P)$ to be one of the generators of $\mathbb{Z}_{2q}$. 
    The elements of $D_{2q} \setminus \mathbb{Z}_{2q}$ are involutions of the form $-j\sigma^s$, where $s = 0,\ldots, 2q-1$. There are two 
    conjugacy classes of them, namely $I_{odd} = \{-j\sigma, -j\sigma^3,\ldots, -j\sigma^{2q-1}\}$ and $I_{even} = \{-j, -j\sigma^2,\ldots, -j\sigma^{2q-2}\}$. It follows from Lemma 
    \ref{genera} that there exists $m \in \{1,2,\ldots, \lfloor \frac{g+1}{2} \rfloor\}$ such that $\Image(1+j\sigma^s) = JC_s \subset P$ is of dimension $q(g-1)-g+m$ if $s$ is even and of dimension $q(g-1)-m+1$ if $s$ is odd. Note that $q(g-1)-g+m < q(g-1)-m+1$ for any $m$ above, so we can distinguish the conjugacy classes by dimension. Take any involution in $I_{odd}$, which we will denote by $j\sigma$. 
    Note that $\Image(1+j\sigma)(1+\sigma^2+\ldots+\sigma^{2q-2}) = JE_1 \subsetneq JC_1$, so there is a natural inclusion map $i: JE_1 \to JC_1$. According to Lemma \ref{key_lemma}, this inclusion corresponds to a unique covering 
     $C_1 \to E_1$, which is precisely the covering map of degree $q$ given in Diagram \eqref{big-tower}. 

By Lemma \ref{elliptic}, the Galois closure of $C_1 \to E_1$ provides the covering 
$\tC \to E_1$ together with an automorphism $\sigma^2$ of order $q$ and an involution $j\sigma$ on $\tC$. Moreover, by lifting the hyperelliptic involution $\iota_{E_1}$ on \(E_1\) to \(\Tilde{C}\) we recover the set of involutions \(I = \{j,j\sigma,\ldots, j\sigma^{2q-1}\}\). Finally, taking compositions of $j\sigma$ with all involutions in $I$ we recover the group $\mathbb{Z}_{2q} \simeq \langle \sigma \rangle$, hence also the covering $f: \tC \to H$. 
\end{proof}
\begin{rem}
    In the proof of Theorem \ref{maineven} we have shown that $JE_1$ can be intrinsically recovered from $(P,\Theta_P)$. The dimension
    $\dim E_1 = g-m$ reflects the fact that the image of the Prym map also has at least $\lfloor \frac{g+1}{2}\rfloor$ components.
\end{rem}

\section{Cyclic coverings of odd degree}\label{sectodd}

In this section, we show that for an  odd non-prime integer \(d \geq 9\) the Prym map $\mathcal{P_H}_g[d]$ of degree \(d\) cyclic \'etale coverings of genus $g$ hyperelliptic curves $$\mathcal{P_H}_g[d]: \mathcal{RH}_g[d]\to\mathcal{A}_{(g-1)(d-1)}^{(1,1,\ldots, 1,d,\dots,d)}$$ is generically injective if $d$ is a power of a prime number and injective for all the other odd numbers. The case $d$ prime
was considered in \cite{NOPS}.
Let us assume $d=pq$ for $p$ a prime number dividing $d$ and $q=\frac{d}{p}\geq 3$ ($q$ is not necessarily prime). 

Let $f: \tC \to H$ be an \'etale cyclic covering of degree $d$ with $H$ a hyperelliptic curve of genus $g$ and $\sigma \in \Aut(\tC)$ be an automorphism inducing $f$. We denote the lifts of the hyperelliptic involution $\iota$ on $H$ by $j, j\sigma, \ldots, j\sigma^{pq-1}$. Note that since $d$ is odd, all involutions are conjugate, hence all quotients of $\tC$ by these involutions are isomorphic. 
Define the curves $C_{\sigma^p} := \tC/\langle \sigma^p \rangle, C_0 = \tC/\langle j \rangle, E_0 = \tC / \langle j, \sigma^p\rangle$, which fit in  following commutative diagram 

\begin{equation}
\label{small-tower}
\begin{tikzcd}
\tC \arrow[rrdd, "q:1"] \arrow[rrrd, "2:1"] \arrow[ddd, "d:1"] &  &                                                    &                        &                        \\
                                                                &  &                                                    & C_0 \arrow[rdd, "q:1"] &                        \\
                                                                &  & C_{\sigma^p} \arrow[lld, "p:1"] \arrow[rrd, "2:1"] &                        &                        \\
H \arrow[rrd, "2:1"]                                            &  &                                                    &                        & E_0 \arrow[lld, "p:1"] \\
                                                                &  & \mathbb{P}^1                                       &                        &                       
\end{tikzcd}
\end{equation}

The genera of these curves are computed by the Hurwitz formula.

\begin{lem}
    We have $g(\tC) = d(g-1)+1, \  g(C_{\sigma^p}) = p(g-1)+1,\ g(C_0) = \frac{(d-1)(g-1)}{2}, \ g(E_0) = \frac{(p-1)(g-1)}{2}$. 
\end{lem}

The following lemma is analogous to Lemma \ref{elliptic} and here one needs to use the fact that  $q>2$.
\begin{lem}\label{elliptic2}
   The covering $C_0\to E_0$ of degree $q$ is never Galois. Its Galois closure is $\tC\to E_0$. 
\end{lem}

The main idea of the proof of the (generic) injectivity of the Prym map is to reduce the problem to coverings of prime degree. Notice that the sub-diagram  of Diagram 
\eqref{small-tower} that starts from $C_{\sigma^p}$ coincides with the diagram of degree $p$ cyclic coverings of hyperelliptic curves. To shorten notation 
let $h:C\to H$ be a cyclic covering of degree $p$ and let $E_0$ be the quotient of $C$ by a lift of the hyperelliptic involution 
$\iota$ from $H$. By \cite{Ortega}, we know that $P(h)\cong JE_0\times JE_0$ with a non-product polarisation.

The following proposition can be seen as a corollary of \cite[Theorem 1.1]{NOPS}.
\begin{prop}\label{simpleness}
    Let $h:C\to H$ be an \'etale cyclic covering of prime odd degree $p$ over a hyperelliptic curve $H$ of genus $g$. Assume $h$ to be general in the moduli space $\mathcal{RH}_g[p]$. Then $\Hom(JH,P(h))=0$.
\end{prop}
\begin{proof}
Recall that $P(h)$ is (abstractly) isomorphic to $JE_0\times JE_0$. By \cite[Theorem 1.1]{NOPS} we have that for a general covering $h:C\to H$, the Jacobian $JE_0$ is simple, hence, by simplicity of $JE_0$, any proper abelian subvariety of $P(h)$ is isogenous to $JE_0$.  By assumption of 
generality, we can also assume $JH$ is simple, hence not isogenous to (a non-simple) $P(h)$. Therefore, $\Hom(JH,P(h))\neq0$ if and only if $JH$ is isogenous to $JE_0$. Now, note that $JH$ is of dimension $g$ while $JE_0$ is of dimension $\frac{(p-1)(g-1)}{2}$. An elementary computation shows that dimensions coincide (assuming $p$ is an odd prime) only if $g=2, p=5$. However, in this case, by \cite{AlbPir} the Prym map $\cP_{\cH_2}[5]$ has $1$-dimensional fibres, hence a general element in the fibre cannot have $JH$ isogenous to $JE_0$, because there are only countably many such surfaces isogenous to a given $JE_0$.
\end{proof}
We also need a generalisation of \cite[Lemma 3.1]{BB} to higher dimensions. 
\begin{lem}\label{lemboxplus}
    Let $A$ be an abelian variety, $M,N$ its subvarieties satisfying $A=M\boxplus N$ with $\Hom(M,N)=0$ and $M$ simple. Then any abelian subvariety of $A$ of dimension $\dim(M)$ is  either equal $M$ or is contained in $N$.
\end{lem}
\begin{proof}
    Let $X\subset A$ be an abelian subvariety of dimension $\dim(X)=\dim(M)$.
    By \cite[p.125]{BL}, we have the 
    equality $e(M)e(N)id = e(N)\Nm_M + e(M)\Nm_N$, that can be restricted to $X$. If $\Nm_M(X)=0$ then 
    $e(M)e(N)X=e(M)\Nm_N(X)$ and since the image of $\Nm_N$ is $N$, we have that $X\subset N$. If $\Nm_N(X)=0$ then by the same argument, we have $X\subset M$ 
    and by equality of dimensions, we have $X=M$. 

Hence, assume $\Nm_M(X)=X_M\neq 0,\ \Nm_N(X)=X_N\neq0$. Since $M$ is simple, we have $X_M=M$ and since $\dim(X)=\dim(M)$,  $\Nm_{M}|_{X}$ is an isogeny. Then, \cite[Proposition 1.2.6]{BL} implies that there exists a map $g:M\to X$ such that $g\circ \Nm_{M}|_{X}=m_k$, where $m_k$ is multiplication by $k$ on $X$, hence $g$ is surjective. Now, $\Nm_N\circ g\neq 0\in \Hom(M,N)$ gives a contradiction.
\end{proof}

\begin{lem}\label{lemunique}
    Let $p$ be a prime number.
    For a general $[h:C\to H]\in \mathcal{RH}_g[p]$, 
    there is no other $h':C'\to H'$ with $C'\simeq C$ and $P(h)\simeq P(h')$. 
    In particular, the embedding $P(h)\subseteq JC$ is unique (up to an automorphism of $P(h)$).
\end{lem}
    \begin{proof}
        Fix a general covering $h:C\to H$ with its top curve $C$ and $P(h)$. Note that, by definition of the Prym variety, we have $JC=h^*JH\boxplus P(h)$ and by Proposition \ref{simpleness} we can assume $JH$ is simple and $\Hom(JH,P(h))=0$.  Assume we have another covering $h':C\to H'$ with $P(h')\cong P(h)$.
         Since $h'^*JH'$ is of dimension $\dim JH$, by 
         Lemma \ref{lemboxplus} we have  that $h'^*JH'=h^*JH$ or 
         $h'^*JH'\subseteq P(h)$. In the latter case, note that $h^*JH$ being orthogonal to $P(h)$
         would be orthogonal to $h'^*JH'$ and hence $h^*JH$ would have to be contained in $P(h')$. This is absurd, since $P(h')\cong P(h)$ and $\Hom(JH,P(h))=0$. 

        In the former case,  the equality of subvarieties $h'^*(JH')=h^*JH$, implies $P(h')=P(h)$ as subvarieties of $JC$. This shows that the embedding $P(h)\subseteq JC$ is unique up to an automorphism of $P(h)$. 
        Moreover, using Proposition \ref{group}, we have the equality $\left<\sigma\right>=\ZZ_p=\left<\sigma'\right>$, so $h=h'$ and $H=H'$.
    \end{proof}

Now, we are ready to proof main theorems of this section.
\begin{thm}\label{mainodd}
Let $d$ be an odd non-prime number. Then, the Prym map  $$\mathcal{P_H}_g[d]: \mathcal{RH}_g[d]\to\mathcal{A}_{(g-1)(d-1)}^{(1,1,\ldots, 1,d,\dots,d)}$$ is generically injective.
\end{thm} 

\begin{proof}
	Let $(P,\Theta_P)$ be an element of the image of $\mathcal{P}_{d}$. By Proposition \ref{group} we recover $\langle \sigma, -j  \rangle \subset \Aut(P,\Theta_P)$ as a subgroup isomorphic to $D_{d}$. 
    Note that $\mathbb{Z}_{d} \simeq \langle \sigma \rangle \subsetneq \langle \sigma, -j \rangle \simeq D_{d}$ is the unique subgroup of $D_{d}$ isomorphic to $\mathbb{Z}_{d}$, hence we can choose $\sigma \in \Aut(P,\Theta_P)$ to be one of the generators of $\mathbb{Z}_{d}$. 
    The elements of $D_{d} \setminus \mathbb{Z}_{d}$ are involutions of the form $-j\sigma^s$, where $s = 0,\ldots, d-1$, so let us fix one them which we denote by $j$. Let $A := \Image(1+j)$ and  $B := \Image(1+j)(1+\sigma^p+\ldots+\sigma^{p(q-1)})$. 
    
Let $f:\tC\to H$ be an element in the preimage $\mathcal{P_H}_g[d]^{-1}(P)$. Then, by Proposition \ref{group} we have $j\in \Aut(\tC)$ is an involution with fixed points so $A=\Image (1+j)=JC_0$ is the Jacobian of the quotient curve $C_0=\tC/j$ embedded in $P$. Similarly, $B=JE_0$ by Lemma \ref{embeddingE}.

   Since $JE_0 \subsetneq JC_0$, we can use Lemma \ref{key_lemma} to get a unique covering $C_0 \ra E_0$ which recovers the covering map of degree $q$ given in Diagram \eqref{small-tower}.

    By Lemma \ref{elliptic2} the Galois closure of $C_0 \to E_0$ is precisely the covering $\tC \to E_0$ and it comes  with an automorphism $\sigma^p$ of order $q$ and an involution $j$ on $\tC$. Dividing by $\sigma^p$, we get a curve $C_{\sigma^p}$. 
    
       Denote by $g:\tC\to C_{\sigma^p}$ and $h:C_{\sigma^p}\to H$ the quotient maps, where $h$ is a quotient of $C_{\sigma^p}$ by the push-down of $\sigma\in\Aut(\tC)$. Note that $J\tC=f^*(JH)\boxplus 
       P(f)$ with $P\cong P(f)$ and $g^*JC_{\sigma^p}=g^*(h^*(JH))\boxplus g^*(P(h))$. However, in the proof of Lemma \ref{embeddingE} we have shown $g^*_{|P(h)}$ is an embedding, so $g^*JC_{\sigma^p}\cap P(f)=g^*P(h)\cong P(h)$. 

    Now, to sum up, from the Prym variety $P$ we have recovered $C_{\sigma^p}$ and $JE_0\times JE_0$ which is isomorphic to the  Prym variety of some covering $h$ of degree $p$. Hence,  assuming $P$ is general, according to Lemma \ref{lemunique}, $h$ is uniquely defined by the construction and hence $f=h\circ g$ is unique.
    Therefore, the Prym map is generically injective. 
    \end{proof}

If $d$ is not a power of an odd prime, we have a stronger result.
\begin{thm}\label{mainodd2primes}
Let $g\geq 2$ and let $d$ be an odd positive integer that has at least two different prime divisors. Then, the Prym map  $$\mathcal{P_H}_g[d]: \mathcal{RH}_g[d]\to\mathcal{A}_{(g-1)(d-1)}^{(1,1,\ldots, 1,d,\dots,d)}$$ is injective.
\end{thm} 
\begin{proof}
  Let $p,p'$ be distinct prime divisors of $d$. Following the proof of Theorem \ref{mainodd}   one obtains a covering  $C_{\sigma^p}$ and the automorphism $\sigma^p$ on 
  $\tC$. Similarly,  for $p'$ one obtains  $C_{\sigma^{p'}}$ and an  automorphism $\sigma^{p'}$. Since $p$ and $p'$ are coprime, 
  there are integers $k, l$ such that $1=kp+lp'$.
  Hence $\sigma = (\sigma^{p})^k (\sigma^{p'})^l$, 
  that is, these automorphisms generate the automorphism $\sigma$ and we get the injectivity of the Prym map without assuming that $P$ is general.
\end{proof}

\section{Final remarks}
Merging our results with the known ones, we obtain the following theorem that describes  the fibres of Prym maps of cyclic \'etale coverings of genus 2 curves. 
\begin{thm}\label{main}
Consider the moduli space $\mathcal{R}_2[d]$ of cyclic \'etale coverings of genus 2 curves. Then, the Prym map  $$\mathcal{P}_2[d]: \mathcal{R}_2[d]\to\mathcal{A}_{d-1}^{(1,1,\ldots, 1,d)}$$ has the following fibres depending on $d$:
\begin{itemize}
    \item if $d=2$ then the fibres are 2-dimensional.
    \item if $d=3,4,5$ then the fibres are 1-dimensional.
    \item if $d=2q, q\geq 3$ the map is injective
 \item if $d=7$ then the map is of generic degree 10. 
 \item if $d=p^k, d>7$ then the map is generically injective. 
 \item if $d$ is odd and not a power of a prime then the map is injective. 
\end{itemize}
\end{thm} 
\begin{proof}
    The proof of the first two cases can be found in \cite{borówka2025prymsmathbbz3timesmathbbz3coveringsgenus} and references therein. The case $d=7$ is treated in \cite{LO7}, primes $d > 7$ for genus 2 curves are solved in \cite{NOPS} and the rest follows from Theorems \ref{maineven}, \ref{mainodd}, \ref{mainodd2primes}.
\end{proof}

\begin{rem}
    We would like to expand a little the result for $g=2, d=6$ case. It is a border case, since for $d\leq 5$ the Prym map is not finite and for $d=7$ the map is generically of degree 10. It is a little surprising that we have injectivity of the Prym map in this case. Note that by Proposition \ref{decomp} the Prym variety in this case is isogenous to $E_1\times F_0^2\times G^2$ for some elliptic curves $E,F,G$ and hence it is completely decomposable. Injectivity implies that the image of the Prym map is of dimension 3, so for a general triple $E,F,G$ there exists $f:\tC\to H$, such that $P(\tC/H)\sim E\times F^2\times G^2$.
\end{rem}

\begin{rem}
    Although we believe that the Prym map is in fact injective, we have to point out that we used genericity assumption in a substantial way in the case $d$ is a power of an odd prime $p$. Certainly it is not true that $JE_0$ is always simple and it may happen that $\Hom(JH,JE_0)\neq 0$. We also need to note that our result does not depend on the generic injectivity of the Prym map for prime degrees, so we do not have any numerical conditions that can be found in \cite[Section 4]{NOPS}. 
\end{rem}

Some cases of small $d,g$ lead to positive dimensional fibers of the Prym map. It is an interesting question to understand the geometry of these fibers.
Moreover, there are two infinite families of maps $\mathcal{P_H}_g[d]$ that are neither covered by our results nor by \cite{NOPS}, namely $d=4, g \geq 3$ and $d$ prime and $g \equiv 3 \pmod d$. Is the Prym map $\mathcal{P_H}_g[d]$ (generically) injective in these cases?

We would also like to mention that in \cite{Ago} it is proved that for $g=2$ the differential of the Prym map is non-injective at the locus of bielliptic curves, hence the Prym map is injective but not an embedding. For higher genera, we do not have such a description.

\bibliographystyle{alpha}
\bibliography{bibl}

\end{document}